\let\origsection=\section \def\section{\@ifstar{\origsection*}{\mysection}}
\def\mysection{\@startsection{section}{1}\z@{.7\linespacing\@plus\linespacing}{.5\linespacing}{\normalfont\scshape\centering\S}}
\renewcommand{\PrintDOI}[1]{\doi{#1}}
\numberwithin{equation}{section}
\numberwithin{figure}{section}
\let\polishlcross=\l
\def\l{\ifmmode\ell\else\polishlcross\fi}
\let\setminus=\smallsetminus
\def\moverlay{\mathpalette\mov@rlay}
\def\mov@rlay#1#2{\leavevmode\vtop{   \baselineskip\z@skip \lineskiplimit-\maxdimen
		\ialign{\hfil$\m@th#1##$\hfil\cr#2\crcr}}}
\newcommand{\charfusion}[3][\mathord]{
	#1{\ifx#1\mathop\vphantom{#2}\fi
		\mathpalette\mov@rlay{#2\cr#3}
	}
	\ifx#1\mathop\expandafter\displaylimits\fi}
\newcommand{\dcup}{\charfusion[\mathbin]{\cup}{\cdot}}
\DeclareFontFamily{U}  {MnSymbolC}{}
\DeclareSymbolFont{MnSyC}         {U}  {MnSymbolC}{m}{n}
\DeclareFontShape{U}{MnSymbolC}{m}{n}{
	<-6>  MnSymbolC5
	<6-7>  MnSymbolC6
	<7-8>  MnSymbolC7
	<8-9>  MnSymbolC8
	<9-10> MnSymbolC9
	<10-12> MnSymbolC10
	<12->   MnSymbolC12}{}
\DeclareMathSymbol{\powerset}{\mathord}{MnSyC}{180}
\DeclareMathSymbol{\YY}{\mathord}{MnSyC}{42}
\let\epsilon=\varepsilon
\let\eps=\epsilon
\let\phi=\varphi
\let\rho=\varrho
\let\theta=\vartheta
\def\ZZ{{\mathds Z}}
\newtheoremstyle{note}  {4pt}  {4pt}  {\sl}  {}  {\bfseries}  {.}  {.5em}          {}
\newtheoremstyle{introthms}  {3pt}  {3pt}  {\itshape}  {}  {\bfseries}  {.}  {.5em}          {\thmnote{#3}}
\newtheoremstyle{remark}  {2pt}  {2pt}  {\rm}  {}  {\bfseries}  {.}  {.3em}          {}
\theoremstyle{plain}
\newtheorem{thm}{Theorem}[section]
\newtheorem{lem}[thm]{Lemma}
\theoremstyle{note}
\theoremstyle{remark}
\begin{document}

\title[Note on the Theorem of Balog, Szemer\'edi, and Gowers]
{Note on the Theorem of Balog, Szemer\'edi, and Gowers}

\author{Christian Reiher}
\address{Fachbereich Mathematik, Universit\"at Hamburg, Hamburg, Germany}
\email{Christian.Reiher@uni-hamburg.de}

\author{Tomasz Schoen}
\address{A. Mickiewicz University, Department of Discrete Mathematics, Pozna\'n, Poland}
\email{schoen@amu.edu.pl}

\begin{abstract}
	We prove that every additive set $A$ with energy $E(A)\ge |A|^3/K$ 
	has a subset $A'\subseteq A$ of size $|A'|\ge (1-\eps)K^{-1/2}|A|$  
	such that $|A'-A'|\le O_\eps(K^{4}|A'|)$. This is, essentially,
	the largest structured set one can get in the Balog-Szemer\'edi-Gowers theorem. 
\end{abstract}

\maketitle

\setcounter{footnote}{1}

\section{Introduction}

An {\it additive set} is a nonempty finite subset of an abelian group.
The {\it energy} of an additive set $A$ is defined to be 
the number $E(A)$ of quadruples $(a_1, a_2, a_3, a_4)\in A^4$ solving the 
equation $a_1+a_2=a_3+a_4$. 
An easy counting argument shows
\begin{equation}\label{e:1}
	E(A)=\sum_{d\in A-A}r_{A-A}(d)^2\,,
\end{equation}
where $r_{A-A}(d)$ indicates the number of representations of $d$ as a difference 
of two members of $A$. So the Cauchy-Schwarz inequality yields $E(A)\ge |A|^4/|A-A|$ 
and, in particular, every additive set $A$ with small difference set $A-A$ contains 
a lot of energy. 
In the converse direction Balog and Szemer\'edi~\cite{BS} proved that large energy 
implies the existence of a substantial subset whose difference set is small. After 
several quantitative  improvements (see e.g., Gowers~\cite{G} and Balog~\cite{B}) 
the hitherto best version of this result was obtained by the second author~\cite{S}. 

\begin{thm}\label{s:0}
	Given a real $K\ge 1$ every additive set $A$ with energy $E(A)\ge |A|^3/K$ 
	has a subset $A'\subseteq A$ of size $|A'|\ge \Omega(|A|/K)$ 
	such that $|A'-A'|\le O(K^{4}|A'|)$. \qed
\end{thm}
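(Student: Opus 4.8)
The plan is to run the graph-theoretic form of the Balog--Szemer\'edi--Gowers machinery, organised so that the final exponent $4$ appears as $2\cdot 2$ through a doubling estimate and the Pl\"unnecke--Ruzsa inequality. Write $n=|A|$ and recall from \eqref{e:1} that $\sum_d r_{A-A}(d)=n^2$ while $\sum_d r_{A-A}(d)^2=E(A)\ge n^3/K$. First I would isolate the \emph{popular differences} $P=\{d:r_{A-A}(d)\ge n/(2K)\}$. Since the differences with $r_{A-A}(d)<n/(2K)$ contribute at most $\tfrac{n}{2K}\sum_d r_{A-A}(d)=n^3/(2K)$ to the energy, the set $P$ retains at least half of it; comparing with $\sum_{d}r_{A-A}(d)=n^2$ and using $r_{A-A}(d)\le n$ then gives that the number of ordered pairs $(a,b)\in A^2$ with $a-b\in P$ is $\ge n^2/(2K)$, while at the same time $|P|\le 2Kn$. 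I would form the graph $G$ on vertex set $A$ joining $a$ to $b$ whenever $a-b\in P$, and after discarding vertices of degree below $n/(8K)$ I may assume
\[
	e(G)\ge \Omega(n^2/K),\qquad |P|\le 2Kn,\qquad \delta(G)\ge n/(8K).
\]

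The heart of the argument is a purely graph-theoretic extraction. From the density of $G$ I would produce, by a dependent-random-choice / sampling argument, a subset $A'\subseteq A$ with $|A'|\ge \Omega(n/K)$ together with a reservoir $C\subseteq A$ such that every vertex of $A'$ has many $G$-neighbours in $C$ and, crucially, all but a negligible fraction of pairs $a,a'\in A'$ have abnormally many common neighbours in $C$. The standard way to get this is to pass to the common neighbourhood of a short random tuple of vertices, which concentrates the edge mass on a set whose typical pair is over-connected, and then to clean away the few under-connected pairs. Counting the resulting additive quadruples inside $A'$ and applying Cauchy--Schwarz bounds the number of distinct sums, and the bookkeeping is arranged so that $|A'|$ and the guaranteed common-neighbour count come out simultaneously large enough to force a doubling bound of the shape $|A'+A'|\le O(K^2)\,|A'|$.

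Finally I would convert small doubling into the desired difference-set estimate: the Pl\"unnecke--Ruzsa inequality upgrades $|A'+A'|\le O(K^2)\,|A'|$ to $|A'-A'|\le O(K^4)\,|A'|$, which is exactly where the exponent $4$ originates. The main obstacle is the middle step: a crude application of dependent random choice trades the size of $A'$ against the connectivity one can guarantee and loses too many powers of $K$, since the common-neighbour count one can enforce for \emph{every} pair sits well below the naive target. The delicate point is therefore to tune the sampling parameters and to exploit the thickness $r_{A-A}(d)\ge n/(2K)$ of the popular differences, so that the extraction yields $|A'|\ge \Omega(n/K)$ and doubling $O(K^2)$ at once; anything weaker in either parameter inflates the final exponent beyond $4$.
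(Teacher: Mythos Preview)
The paper does not prove Theorem~\ref{s:0}; it is quoted from~\cite{S} and only the stronger Theorem~\ref{s:1} is proved here. So let me compare your outline to how the exponent~$4$ is actually obtained (both in~\cite{S} and in Lemma~\ref{l:Q} of this paper), because your architecture diverges from it in a way that creates a real gap.

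Your plan hinges on an intermediate bound $|A'+A'|\le O(K^{2})|A'|$, to be upgraded to $|A'-A'|\le O(K^{4})|A'|$ via Pl\"unnecke--Ruzsa. Two things go wrong. First, ``counting additive quadruples and applying Cauchy--Schwarz'' yields the inequality $|A'+A'|\ge |A'|^{4}/E(A')$, which is a \emph{lower} bound on the sumset, not an upper bound; so that sentence does not do what you need. Second, the common-neighbour / path machinery you set up produces, for each pair $a_1,a_2\in A'$, many representations of the \emph{difference} $a_1-a_2$ as a signed sum of edge labels in $P$; it says nothing about $a_1+a_2$. Consequently the natural output of your graph step is already a bound on $|A'-A'|$, and routing through Pl\"unnecke--Ruzsa can only square whatever exponent you have, not improve it.

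Even if you drop the Pl\"unnecke--Ruzsa detour and bound $|A'-A'|$ directly, the fixed threshold $r_{A-A}(d)\ge n/(2K)$ is too coarse to reach $K^{4}$. With $|P|\le 2Kn$ and density $\delta\asymp K^{-1}$, the path-of-length-four lemma (Lemma~\ref{lem:tv} here) gives at least $c\,\delta^{4}n^{2}|A'|$ representations of each difference as $p_1-p_2+p_3-p_4$, whence
\[
	|A'-A'|\ \le\ \frac{|P|^{4}}{c\,\delta^{4}n^{2}|A'|}
	\ \asymp\ \frac{K^{4}n^{4}}{K^{-4}n^{2}\cdot n/K}
	\ =\ K^{9}n
	\ \asymp\ K^{10}|A'|\,,
\]
far from $K^{4}$. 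The actual proofs do \emph{not} fix the popularity threshold in advance: they select it adaptively (in this paper via the numerical Lemma~\ref{l:ST}, in~\cite{S} via a dyadic pigeonhole) so that the chosen set $Q'\subseteq A-A$ simultaneously satisfies a density lower bound and the balance $|Q'|^{4}\le O(K^{4})\,\delta^{6}n^{4}$; see~\eqref{eq:3858}. It is precisely this balancing, absent from your sketch, that forces $|A'-A'|\le O(K^{4})|A'|$. Your final paragraph correctly identifies the middle step as the obstacle, but ``tuning the sampling parameters'' inside dependent random choice is not where the saving lies; the saving is in choosing which differences to call popular.
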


When investigating the question how a quantitatively optimal version of this 
result might read, there are two different directions one may wish to pursue. 
First, there is the obvious problem whether the exponent $4$ can be replaced 
by some smaller number. Second, one may try to find ``the largest'' set 
$A'\subseteq A$ such that $|A'-A'|\le O_K(|A'|)$ holds. As the following 
example demonstrates, there is no absolute constant $\eps_\star>0$  
such that $|A'|\ge (1+\eps_\star)K^{-1/2}|A|$ can be achieved in general.  

Fix an arbitrary natural number $n$. For a very large finite abelian group~$G$ 
we consider the additive set
\[
	A=\bigl\{(g_1, \ldots, g_n)\in G^n\colon \text{ there is at most one index $i$ 
	such that $g_i\ne 0$}\bigr\}
\]
whose ambient group is $G^n$. Obviously we have  
\[
	|A|=|G|n+O_n(1)
	\quad \text{ and } \quad
	E(A)=|A|^3/n^2+O_n(|A|^2)\,,
\]
so the real number $K$ satisfying $E(A)=|A|^3/K$ is roughly $n^2$. 
However, every $A'\subseteq A$ of size $|A'|\ge (1+\eps)|G|$ satisfies 
$|A'-A'|\ge \eps^2 |G|^2$. 
Our main result implies that this is, in some sense, already the worst example. 
More precisely, for every fixed $\eps>0$ the Balog-Szemer\'edi-Gowers theorem 
holds with $|A'|\ge (1-\eps)K^{-1/2}|A|$. Perhaps surprisingly, we can also 
reproduce the best known factor $K^4$. 
 	
\begin{thm}\label{s:1} 
	Given real numbers $K\ge 1$, $\eps\in (0, 1/2)$, and an additive set $A$ 
	with energy $E(A)\ge |A|^3/K$ there is a subset $A'\subseteq A$ such that 
		\[
		|A'|\ge (1-\eps)K^{-1/2}|A|
		\quad \text{ and } \quad 
		|A'-A'|\le 2^{33}\eps^{-9}K^{4}|A'|=O_\eps(K^4|A'|)\,.
	\]
	\end{thm}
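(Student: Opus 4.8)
The plan is to follow the standard strategy for Balog--Szemer\'edi--Gowers type statements: pass to a graph of \emph{popular differences} and exploit that two vertices with many common neighbours in this graph are joined by a difference admitting many representations. Throughout write $r(d)=r_{A-A}(d)=|A\cap(A+d)|$, so that $\sum_d r(d)=|A|^2$ and $\sum_d r(d)^2=E(A)\ge |A|^3/K$. For a threshold $\tau$ to be chosen, let $P=\{d\neq 0:\ r(d)\ge\tau\}$ and let $H$ be the graph on $A$ in which $a\sim b$ iff $a-b\in P$. The decisive elementary observation is this: if $A'\subseteq A$ is such that every pair $a,b\in A'$ has at least $t$ common neighbours $c$ (i.e.\ elements with $a-c\in P$ and $b-c\in P$), then $|A'-A'|\le |P|^2/t$. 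Indeed, for fixed $a,b$ the map $c\mapsto(a-c,b-c)$ injects these common neighbours into $\{(p,q)\in P\times P:\ p-q=a-b\}$, so every value of $A'-A'$ admits at least $t$ representations as an ordered difference of elements of $P$, and distinct values use disjoint representations. Thus the theorem reduces to finding a \emph{large} subset $A'$, of size $\ge(1-\eps)K^{-1/2}|A|$, all of whose pairs have many common neighbours.

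Next I would fix the thresholds guided by the extremal example $A=\bigcup_i G e_i$, where the popular differences have $r(d)$ of order $K^{-1/2}|A|$ and a single axis is the desired set. This suggests taking $\tau$ of order $K^{-1/2}|A|$ (with an $\eps$-dependent factor). With such $\tau$ one checks, from $\sum_d r(d)^2\ge|A|^3/K$ together with $\sum_d r(d)=|A|^2$, that the number of edges $e=\sum_{d\in P}r(d)$ is of order $K^{-1/2}|A|^2$, so the average degree in $H$ is of the target order $K^{-1/2}|A|$, while $|P|\le|A|^2/\tau$ is of order $K^{1/2}|A|$. Writing $N(a,b)$ for the number of common neighbours, Cauchy--Schwarz gives
\[
	\sum_{a,b\in A}N(a,b)=\sum_{c}\deg_H(c)^2\ge \frac{e^2}{|A|}\,,
\]
which is of order $K^{-1}|A|^3$; hence a typical pair has codegree of order $K^{-1}|A|$. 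Since $|P|^2$ is of order $K|A|^2$, the reduction above shows that forcing $|A'-A'|=O(K^4|A'|)$ only requires every pair of $A'$ to have codegree at least of order $K^{-5/2}|A|$, far below this typical value.

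To extract the subset itself I would run a dependent--random--choice argument: choose a seed $w$ from a suitable distribution, set $A'_0=\{a\in A:\ a-w\in P\}$, and delete one endpoint from every pair of $A'_0$ whose codegree falls below $t$. A second-moment computation shows that $\EE|A'_0|$ is of order $K^{-1/2}|A|$ while the expected number of low-codegree pairs is only a small fraction of $|A'_0|^2$; optimising the deletion keeps $|A'|\ge(1-\eps)K^{-1/2}|A|$. Tracking the constants through this step---the loss in each Cauchy--Schwarz application and the interplay between $\tau$, $t$, and the permitted fraction of bad pairs---is what produces the explicit bound $|A'-A'|\le 2^{33}\eps^{-9}K^{4}|A'|$.

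The main obstacle is bridging the gap between the \emph{average} degree guaranteed by energy concentration and the \emph{target size} $K^{-1/2}|A|$. Pure energy concentration only licenses a threshold of order $K^{-1}|A|$, for which the average degree is of order $K^{-1}|A|$---a factor $K^{1/2}$ short of what is needed. The point is that this shortfall can occur only in the presence of a \emph{super-popular} difference: if $e=\sum_{d\in P}r(d)$ were much smaller than $K^{-1/2}|A|^2$, then, since $\sum_{d\in P}r(d)^2\ge|A|^3/(2K)$, some $d_0\in P$ would satisfy $r(d_0)\ge\bigl(\sum_{d\in P}r(d)^2\bigr)/e\gg K^{-1/2}|A|$, producing a large set $B=A\cap(A+d_0)$ that carries most of the energy. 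Separating this concentrated case---where $B$ must be analysed, or recursed upon, directly---from the spread-out case in which the clustering argument above applies, and doing so without sacrificing the sharp factor $(1-\eps)$, is the crux of the proof.
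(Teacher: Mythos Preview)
Your overall architecture---a popular-difference graph, the codegree-to-representations conversion $|A'-A'|\le |P|^2/t$, and dependent random choice to extract $A'$---is sound and is indeed close to what the paper does in its first case (Lemma~\ref{l:P}). The gap is that the dichotomy you propose at the end does not cover the actual hard case.

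Concretely, the assertion that with a threshold $\tau$ of order $K^{-1/2}|A|$ one ``checks'' $e=\sum_{d\in P}r(d)$ to be of order $K^{-1/2}|A|^2$ is false. Take a random set of density $1/K$ in a cyclic group of order $K|A|$: then $r(d)\approx |A|/K$ for every nonzero $d$, so $\sum_d r(d)=|A|^2$ and $\sum_d r(d)^2\approx |A|^3/K$, yet $P=\{d:r(d)\ge cK^{-1/2}|A|\}$ is empty as soon as $K>c^{-2}$. Your fallback (``if $e$ is small, a super-popular $d_0$ exists'') presupposes $\sum_{d\in P}r(d)^2\ge|A|^3/(2K)$, but at this threshold the \emph{unpopular} differences can carry all the energy: their contribution is bounded only by $\tau\sum_d r(d)\sim K^{-1/2}|A|^3$, which dominates $|A|^3/K$. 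So in this regime there is neither a dense popular-difference graph nor a super-popular difference, and both branches of your dichotomy are vacuous. Lowering $\tau$ to $|A|/K$ restores the energy on $P$ but then $|P|$ can be as large as $K|A|$ and your bound $|P|^2/t$ no longer gives $K^4$.

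This regime is precisely the paper's Case~$Q$ (Lemma~\ref{l:Q}), and its resolution is not a threshold at all. Writing $x_i=K^{1/2}r(d_i)/|A|\in[0,1]$ for $d_i\in Q$, one has $\sum x_i\le K^{1/2}|A|$ and $\sum x_i^2\ge(1-\eps/4)|A|$; a purely numerical lemma (Lemma~\ref{l:ST}) then selects an index set $I$ for which $\sum_{i\in I}x_i$ is simultaneously at least $(1-\eps/4)\sum x_i^2$ \emph{and} bounded below by a suitable expression in $|I|$. Translated back, this produces a set $Q'\subseteq Q$ such that the relation $R=\{(a,b):a-b\in Q'\}$ has density $\delta\ge(1-\eps/2)K^{-1/2}$ while $|Q'|^4/(\delta^6|A|^4)=O_\eps(K^4)$; feeding $R$ into Lemma~\ref{lem:tv} (which outputs paths of length four, hence the fourth power of $|Q'|$) then yields both $|A'|\ge(1-\eps)K^{-1/2}|A|$ and $|A'-A'|=O_\eps(K^4)|A'|$. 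Your sketch has no analogue of this selection step, and without it one cannot reach the sharp size $(1-\eps)K^{-1/2}|A|$ together with any polynomial doubling bound.
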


Our proof has two main cases and in one of them (see Lemma~\ref{l:P} below)
we even get the stronger bound $|A'-A'|\le O_\eps(K^3|A'|)$. It would be interesting
to prove this in the second case as well. 
Using examples of the form $A=\{x\in \ZZ^d\colon \|x\|\le R\}$ one can show that 
the exponent~$4$ cannot be replaced by any number smaller than 
$\log(4)/\log(27/16)\approx 2.649$ (see~\cite{Shao}).

\section{Preliminaries}

This section discusses two auxiliary results we shall require for the proof 
of Theorem~\ref{s:1}.
The first of them is similar to~\cite{TV}*{Lemma~6.19}.  

\begin{lem}\label{lem:tv}
	If $\delta, \xi\in (0, 1]$ and $R\subseteq A^2$ denotes a binary relation 
	on a set~$A$ such that $|R|\ge \delta |A|^2$, then there is a set $A'\subseteq A$ 
	of size $|A'|\ge \delta(1-\xi) |A|$ which possesses the following property: 
	For every pair $(a_1, a_2)\in A'^2$ there are at least $2^{-7}\delta^4\xi^4|A|^2|A'|$
	triples $(x, b, y)\in A^3$ such that $(a_1, x), (b, x), (b, y), (a_2, y)\in R$. 
\end{lem}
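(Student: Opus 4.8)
The plan is to pass to the bipartite picture. Regard $R$ as a bipartite graph between two copies of $A$, write $N(a)=\{z\in A:(a,z)\in R\}$ for the neighbourhood of a left vertex $a$, and set $f(a,b)=|N(a)\cap N(b)|$ for the number of common neighbours of $a$ and $b$. A triple $(x,b,y)$ as in the statement is precisely a walk $a_1-x-b-y-a_2$ of length four, and for fixed endpoints the number of such walks equals $\sum_{b\in A}f(a_1,b)f(a_2,b)$. Restricting the middle vertex $b$ to $A'$ itself and using $f(a_1,b),f(a_2,b)\ge\tau$ we obtain at least $|A'|\tau^2$ walks, so with $\tau=2^{-7/2}\delta^2\xi^2|A|$ we get exactly $\tau^2|A'|=2^{-7}\delta^4\xi^4|A|^2|A'|$. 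Hence it suffices to construct $A'$ with $|A'|\ge\delta(1-\xi)|A|$ such that $f(a,b)\ge\tau$ for every pair $a,b\in A'$, that is, a clique in the common-neighbourhood graph; note that the diagonal case forces $|N(a)|\ge\tau$, a mild lower bound on the degrees of the vertices retained.

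To build such an $A'$ I would use dependent random choice, in the spirit of the cited lemma. The first moment $\sum_{a,b}f(a,b)=\sum_{z\in A}|\{a:(a,z)\in R\}|^2\ge|R|^2/|A|\ge\delta^2|A|^3$ shows that the average common neighbourhood is of order $\delta^2|A|$, the same order as $\tau$. Choose a pivot $w\in A$ at random, weighting $w$ proportionally to its in-degree $|\{a:(a,w)\in R\}|$, and let $U$ be the set of left vertices adjacent to $w$; a short convexity estimate gives $\EE|U|\ge\delta|A|$. Call a pair $(a,b)$ bad if $f(a,b)<\tau$; a bad pair survives into $U$ only with probability proportional to $f(a,b)/|A|<\tau/|A|$, so the expected number of bad pairs inside $U$ is small. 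Deleting one endpoint from each bad pair, quantified through a Turán/Caro–Wei independent-set bound in the graph of bad pairs, produces $A'\subseteq U$ in which every pair satisfies $f(a,b)\ge\tau$ while $|A'|\ge(1-\xi)|U|\ge\delta(1-\xi)|A|$.

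The delicate point, and where the bookkeeping must be done honestly, is to secure the near-extremal size $\delta(1-\xi)|A|$ together with the overlap threshold $\tau$ of order $\delta^2\xi^2|A|$. A single uniform pivot keeps $|U|$ of order $\delta|A|$ but admits far too many bad pairs, whereas iterating over several pivots suppresses the bad pairs at the cost of shrinking $U$ to order $\delta^{t}|A|$, which is fatal here. The way out is to first discard the vertices of small degree, losing only an $O(\delta)$ fraction of $R$, and to weight the pivot by degree: this concentrates the common-neighbourhood mass on the heavy right vertices and caps the number of relevant middle vertices $b$, so that the expected count of bad pairs can be driven below $\tfrac{\xi}{2}|U|$ and the deletion step costs only the factor $1-\xi$. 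Optimising the degree cut-off and the threshold $\tau$ against this deletion loss is what yields the explicit constant $2^{-7}$ and the exponents $\delta^4\xi^4$. I expect this balance, extracting essentially the whole first moment $\delta|A|$ rather than the lossy $\delta^{t}|A|$ of a black-box dependent random choice, to be the principal obstacle.
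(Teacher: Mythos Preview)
Your pivot-and-prune outline is essentially what the paper does: choose a right vertex $x_\star$ by averaging, set $A_\star=N(x_\star)$, and discard the vertices of $A_\star$ with too many ``bad'' partners, where a pair $(a,a')$ is bad when $f(a,a')=|K(a,a')|<\delta^2\xi^2|A|/8$. The paper's averaging of $|N(x)|^2-8\xi^{-1}|N(x)^2\cap\Omega|$ is exactly your degree-weighted first-moment computation in deterministic form, and it does yield $|A_\star|\ge\delta(1-\xi)^{1/2}|A|$ together with bad-pair density at most $\xi/8$ inside $A_\star$.

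The gap is in your reduction. You ask for a set $A'$ in which \emph{every} pair is good (a clique in the good graph), and you plan to obtain it by deleting one endpoint per bad pair or by Caro--Wei. This is too strong: bad-pair density $\xi/8$ on $A_\star$ does not force an independent set of size $(1-\xi)|A_\star|$ in the bad graph. For instance, if the bad pairs form a complete graph on some $\tfrac{1}{2}\sqrt{\xi}\,|A_\star|$ vertices, the density is $\le\xi/8$ but the largest bad-free subset has size about $(1-\tfrac{1}{2}\sqrt{\xi})|A_\star|$, which is strictly smaller than $(1-\xi)|A_\star|$ for small $\xi$. More generally, with $B$ bad pairs the deletion/Tur\'an route only guarantees a bad-free set of order $|A_\star|^2/(|A_\star|+2B)$, and here $B$ is of order $\xi|A_\star|^2$, not $\xi|A_\star|$.

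The fix, and what the paper actually does, is to drop the clique requirement. Define $A'$ to be the set of $a\in A_\star$ with at most $|A_\star|/4$ bad partners in $A_\star$; Markov on the bad-pair count gives $|A'|\ge(1-\xi/2)|A_\star|\ge\delta(1-\xi)|A|$. Now the middle vertex $b$ is allowed to range over $A_\star$ rather than $A'$: for any $a_1,a_2\in A'$ the two sets of good partners each have size at least $\tfrac34|A_\star|$, so they meet in at least $|A_\star|/2\ge|A'|/2$ vertices $b$, each contributing $f(a_1,b)f(b,a_2)\ge(\delta^2\xi^2|A|/8)^2$ walks. This recovers exactly your target $2^{-7}\delta^4\xi^4|A|^2|A'|$ without ever needing $A'$ itself to be bad-free.
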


\begin{proof}
	Set $N(x)=\{a\in A\colon (a, x)\in R\}$ for every $x\in A$. 
	Since $\sum_{x\in A}|N(x)|=|R|\ge \delta |A|^2$, the Cauchy-Schwarz inequality yields
		\begin{equation}\label{eq:4254}
		\sum_{x\in A}|N(x)|^2\ge \delta^2|A|^3\,.
	\end{equation}
		Setting $K(a, a')=\{x\in A\colon a, a'\in N(x)\}$ for every pair $(a, a')\in A^2$
	and 
		\begin{align*}
		\Omega
		=
		\bigl\{(a, a')\in A^2\colon |K(a, a')|\le \delta^2\xi^2|A|/8\bigr\}
	\end{align*}
		a double counting argument yields  
		\[
		\sum_{x\in A}|N(x)^2\cap \Omega|
		=
		\sum_{(a, a')\in \Omega}|K(a, a')|
		\le
		\delta^2\xi^2|A||\Omega|/8
		\le
		\delta^2\xi^2|A|^3/8\,.
	\]
		Together with~\eqref{eq:4254} we obtain 
		\[
		\sum_{x\in A} \bigl(|N(x)|^2-8\xi^{-1}|N(x)^2\cap \Omega|\bigr)
		\ge
		\delta^2(1-\xi)|A|^3
	\]
		and, hence, there exists some $x_\star\in A$ such that the set $A_\star=N(x_\star)$
	satisfies 
		\begin{equation}\label{eq:4819}
		|A_\star|^2-8\xi^{-1}|A_\star^2\cap\Omega|\ge \delta^2(1-\xi)|A|^2\,.
	\end{equation}
		We shall prove that the set 
		\[
		A'=\{a\in A_\star\colon \text{ the number of all $a'\in A_\star$ 
			with $(a, a')\in \Omega$ is at most $|A_\star|/4$}\}
	\]
		has all required properties. 
	By~\eqref{eq:4819} we have 
		\[
		|A_\star\setminus A'||A_\star|/4
		\le 
		|A_\star^2\cap\Omega|
		\le 
		\xi|A_\star|^2/8\,,
	\]
	for which reason 
		\[
		|A'|
		\ge 
		(1-\xi/2)|A_\star|
		\ge 
		(1-\xi)^{1/2}|A_\star|
		\overset{\eqref{eq:4819}}{\ge}
		\delta(1-\xi) |A|\,,
	\]
		meaning that $A'$ is indeed sufficiently large. To conclude the proof we need to show 
		\[
		\sum_{b\in A}|K(a_1, b)\times K(b, a_2)| 
		\ge
		2^{-7}\delta^4\xi^4|A|^2|A'|
	\]
		for every pair $(a_1, a_2)\in A'^2$. This follows from the fact that due to 
	the definition of $A'$ there are at least $|A_\star|/2$ elements $b\in A_\star$ 
	such that the sets $K(a_1, b)$ and $K(b, a_2)$ both have at least the 
	size $\delta^2\xi^2|A|/8$.
\end{proof}

\begin{lem}\label{l:ST}
	Suppose that the real numbers $x_1, \dots, x_n\in [0, 1]$ do not vanish 
	simultaneously.
	Denote their sum by $S$ and the sum of their squares by $T$.  
	For every $\alpha\in (0, 1)$ there exists
	a set $I\subseteq [n]$ such that 		\[
		\sum_{i\in I}x_i 
		\ge 
		\max\left\{\alpha T, 
		\biggl(\frac{(1-\alpha)^5|I|^4T^4}{2^{10}S^2}\biggr)^{1/6}\right\}\,.
	\]
	\end{lem}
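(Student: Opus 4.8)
The plan is to pass to the sorted sequence and argue by contradiction, reducing both demands to a single statement about prefix sums. First I would discard the vanishing terms and relabel so that $x_1\ge x_2\ge\dots\ge x_n>0$; writing $f(j)=x_1+\dots+x_j$ for the prefix sums and setting $c=\bigl((1-\alpha)^5T^4/(2^{10}S^2)\bigr)^{1/6}$, so that $c^3=(1-\alpha)^{5/2}T^2/(2^5S)$, it is enough to produce an index $j$ with $f(j)\ge\alpha T$ and $f(j)\ge c\,j^{2/3}$, because taking $I$ to be the indices of the $j$ largest values makes $\sum_{i\in I}x_i=f(j)$ and $|I|=j$, whence $\bigl(\sum_{i\in I}x_i\bigr)^6\ge c^6|I|^4$. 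Since $x_i\in[0,1]$ gives $T\le S$, the index $j_0=\min\{j:f(j)\ge\alpha T\}$ is well defined, and every $j\ge j_0$ already meets the first requirement. Thus it suffices to find some $j\ge j_0$ with $f(j)\ge c\,j^{2/3}$, and I would assume for contradiction that no such index exists.

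Under this assumption $f(j)<c\,j^{2/3}$ for all $j\ge j_0$, and since $f(j)\ge j\,x_j$ this yields the envelope bound $x_j<c\,j^{-1/3}$ throughout that range. I would then estimate the two parts of $T=\sum_j x_j^2$ separately. The head is harmless: from $x_j^2\le x_j$ and the minimality of $j_0$ one gets $\sum_{j<j_0}x_j^2\le f(j_0-1)<\alpha T$. The heart of the argument — and the step I expect to be the main obstacle — is bounding the tail $\sum_{j\ge j_0}x_j^2$ using only the two pieces of information $x_j<c\,j^{-1/3}$ and $\sum_{j\ge j_0}x_j\le S$, since a careless estimate retains an unwanted dependence on the number of surviving indices. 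I would remove it by a layer-cake computation: the envelope forces $N(t):=|\{j\ge j_0:x_j>t\}|\le(c/t)^3$, while $\int_0^\infty N(t)\,dt=\sum_{j\ge j_0}x_j\le S$, so splitting $\sum_{j\ge j_0}x_j^2=\int_0^\infty 2t\,N(t)\,dt$ at a level $\tau$ and using the mass bound below $\tau$ and the envelope above $\tau$ gives $\sum_{j\ge j_0}x_j^2\le 2\tau S+2c^3/\tau$; optimising at $\tau=\sqrt{c^3/S}$ produces the clean estimate $\sum_{j\ge j_0}x_j^2\le 4\sqrt{c^3S}$.

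Finally I would substitute $c^3=(1-\alpha)^{5/2}T^2/(2^5S)$ to rewrite $4\sqrt{c^3S}=2^{-1/2}(1-\alpha)^{5/4}T$, so the two estimates combine to $T<\alpha T+2^{-1/2}(1-\alpha)^{5/4}T$, that is $(1-\alpha)<2^{-1/2}(1-\alpha)^{5/4}$, which forces $(1-\alpha)^{1/4}>\sqrt2$ and hence the absurd $1-\alpha>4$. This contradiction shows that some $j\ge j_0$ satisfies $f(j)\ge c\,j^{2/3}$, and the set $I$ of the corresponding $j$ largest elements then meets both bounds, proving the lemma. The wide margin in the last inequality — we only need $2^{-1/2}(1-\alpha)^{1/4}<1$ — reflects the generous constant $2^{10}$, so the only place where real care is required is the layer-cake bound on the tail sum of squares.
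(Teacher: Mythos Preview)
Your proof is correct. The overall skeleton coincides with the paper's: sort the sequence, look at prefix sums, define $j_0$ (the paper's $k$) as the first index where the prefix sum reaches $\alpha T$, bound the head $\sum_{j<j_0}x_j^2<\alpha T$, assume for contradiction that every candidate $I=[j]$ with $j\ge j_0$ fails the second bound, and extract the envelope $x_j<c\,j^{-1/3}$ from $jx_j\le f(j)<c\,j^{2/3}$.

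Where you diverge is in converting this envelope, together with $\sum_{j\ge j_0}x_j\le S$, into an upper bound on $\sum_{j\ge j_0}x_j^2$. The paper introduces a second cutoff index $\ell$ (the last index with $x_\ell\ge(1-\alpha)T/(2S)$), discards the super-tail $\sum_{i>\ell}x_i^2\le(1-\alpha)T/2$, bounds $\ell\le 4S^2/((1-\alpha)^2T)$, and then sums the pointwise inequality $x_i^2<c^2 i^{-2/3}$ over $k\le i\le\ell$ via $\sum_{i\le\ell}i^{-2/3}\le 3\ell^{1/3}$. You instead run a layer-cake argument: the envelope gives $N(t)\le(c/t)^3$, the mass constraint gives $\int N\le S$, and splitting $\int 2tN(t)\,dt$ at $\tau=\sqrt{c^3/S}$ yields $\sum_{j\ge j_0}x_j^2\le 4\sqrt{c^3S}$. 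Your route avoids the auxiliary index $\ell$ and packages the trade-off between the envelope and the total mass into a single optimisation; the paper's route is slightly more hands-on but makes the role of the threshold value $(1-\alpha)T/(2S)$ explicit. Both land on the same contradiction with comparable slack.
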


\begin{proof}
	For reasons of symmetry we may assume $x_1\ge\dots\ge x_n$. 
	Set $S_i=\sum_{j=1}^i x_j$ for every nonnegative $i\le n$. 
	Due to $T\le x_1 S$ and $x_1\le 1$ we have $T\le S=S_n$ and 
	thus there exists a smallest index $k\in [n]$ satisfying 
	$S_k\ge \alpha T$. Notice that
		\[
		\sum_{i=1}^{k-1}x_i^2
		\le
		\sum_{i=1}^{k-1}x_i
		=S_{k-1}
		\le \alpha T\,.
	\]
	Moreover $x_1\ge T/S$ implies the existence of a largest index $\ell$ 
	such that $x_\ell\ge (1-\alpha)T/(2S)$. 
	Due to 
		\begin{equation*}		\sum_{i=\ell+1}^n x_i^2 
		\le 
		\frac{(1-\alpha)T}{2S}\sum_{i=\ell+1}^n x_i
		\le 
		\frac{(1-\alpha)T}2\,,
	\end{equation*}
		we have 
		\begin{equation}\label{eq:ST2}
		\sum_{i=k}^{\ell} x_i^2 
		\ge  
		\frac{(1-\alpha)T}2\,,
	\end{equation}
	whence, in particular, $\ell\ge k$. Next,
		\[
		\ell \biggl(\frac{(1-\alpha)T}{2S}\biggr)^2
		\le 
		\sum_{i=1}^\ell x_i^2
		\le 
		T
	\]
		entails
		\begin{equation}\label{eq:ST3}
		(1-\alpha)^2\ell T\le 4 S^2\,. 
	\end{equation}
		
	Now assume for the sake of contradiction that our claim fails. 
	Every $i\in [k, \ell]$ satisfies $S_i\ge S_k\ge \alpha T$ 
	and thus the failure of $I=[i]$ discloses 	
		\begin{equation*}		S_i<\biggl(\frac{(1-\alpha)^5i^4T^4}{2^{10}S^2}\biggr)^{1/6}\,.
	\end{equation*}
		Combined with $ix_i\le S_i$ this entails 
		\[
		\sum_{i=k}^\ell x_i^2
		\le 
		\biggl(\frac{(1-\alpha)^5T^4}{2^{10}S^2}\biggr)^{1/3}\sum_{i=k}^\ell i^{-2/3}\,.
	\]
		In view of~\eqref{eq:ST2} we are thus led to
		\[
		\biggl(\frac{2^7S^2}{(1-\alpha)^2T}\biggr)^{1/3}
		\le 
		\sum_{i=k}^\ell i^{-2/3}
		\le
		\int_0^{\ell}x^{-2/3} \mathrm{d}x
		= 
		3\ell^{1/3}\,, 
	\]
		i.e., $2^7S^2\le 27(1-\alpha)^2\ell T$, which contradicts~\eqref{eq:ST3}.
\end{proof}

\section{The proof of Theorem~\ref{s:1}}

Let us fix two real numbers $K\ge 1$ and $\eps\in (0, 1/2)$ as well as 
an additive set $A$ satisfying $E(A)\ge |A|^3/K$. We consider the partition 
\[
	A-A=P\dcup  Q  
\]
defined by 
\begin{align*}
	P&=\bigl\{d\in A-A\colon r_{A-A}(d)\ge K^{-1/2}|A|\bigr\} \\
	\text{ and } \quad
	Q&=\bigl\{d\in A-A\colon r_{A-A}(d) < K^{-1/2}|A|\bigr\} \,.
\end{align*}
According to~\eqref{e:1} at least one of the cases 
\begin{equation}\label{eq:1113}
	 \sum_{d\in P}r_{A-A}(d)^2 \ge \frac{\eps|A|^3}{4K}
	 \quad \text{ or } \quad
	 \sum_{d\in Q}r_{A-A}(d)^2 \ge \frac{(4-\eps)|A|^3}{4K}
\end{equation}
needs to occur and we begin by analysing the left alternative.  

\begin{lem}\label{l:P}
	If $\sum_{d\in P}r_{A-A}(d)^2\ge \eps|A|^3/(4K)$, then there exists 
	a set $A'\subseteq A$ of size $|A'|\ge (1-\eps)K^{-1/2}|A|$ such that 
	$|A'-A'|\le 2^{10}\eps^{-4} K^3|A'|$.
\end{lem}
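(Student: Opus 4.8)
The plan is to manufacture the set $A'$ out of the popular differences $P$ by running the Balog–Szemerédi–Gowers core of Lemma~\ref{lem:tv} on the ``popular difference'' relation, while using Lemma~\ref{l:ST} to pick the right sub-collection of $P$. First I would normalise, setting $x_d=r_{A-A}(d)/|A|$ for $d\in P$; by the definition of $P$ these satisfy $x_d\in[K^{-1/2},1]$, and with $S=\sum_{d\in P}x_d$ and $T=\sum_{d\in P}x_d^2=|A|^{-2}\sum_{d\in P}r_{A-A}(d)^2$ the hypothesis reads $T\ge \eps|A|/(4K)$. The one structural fact I would squeeze out of the threshold defining $P$ is
\[
	S\le K^{1/2}T\,,
\]
which holds because $x_d\ge K^{-1/2}$ forces $x_d\le K^{1/2}x_d^2$; this is exactly the input that will turn the generic exponent into the sharper $K^3$.

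Next I would feed $\{x_d\}_{d\in P}$ into Lemma~\ref{l:ST} with a parameter $\alpha$ to be optimised, obtaining $P'\subseteq P$ whose weight $\Sigma'=\sum_{d\in P'}x_d=|A|^{-1}\sum_{d\in P'}r_{A-A}(d)$ is large and satisfies $|P'|^4/(\Sigma')^6\le 2^{10}S^2/((1-\alpha)^5T^4)$. I would then apply Lemma~\ref{lem:tv} to the relation $R=\{(a,b)\in A^2\colon a-b\in P'\}$, of density $\delta=\Sigma'/|A|$, with $\xi$ of order $\eps$, producing $A'\subseteq A$ with the path-counting property. For the doubling bound, every $d'\in A'-A'$ can be written, via a triple $(x,b,y)$ with $(a_1,x),(b,x),(b,y),(a_2,y)\in R$, as $d'=(a_1-x)-(b-x)+(b-y)-(a_2-y)=p_1-p_2+p_3-p_4$ with $p_i\in P'$, and in at least $2^{-7}\delta^4\xi^4|A|^2|A'|$ ways; counting the at most $|P'|^4$ quadruples yields
\[
	|A'-A'|\le \frac{2^{7}|P'|^{4}|A|^{2}}{(\Sigma')^{4}\xi^{4}|A'|}\,.
\]
Inserting the selection bound and using $|A'|\ge\Sigma'(1-\xi)$ collapses this to a multiple of $|A|^2S^2/T^4$ times $|A'|$, whereupon $S\le K^{1/2}T$ and $T\ge\eps|A|/(4K)$ give $|A|^2S^2/T^4\le 16K^3/\eps^2$; after fixing $\alpha$ and $\xi$ in terms of $\eps$ this lands at $|A'-A'|\le O_\eps(K^3|A'|)$, and with the sharpest choices at the stated $2^{10}\eps^{-4}K^3|A'|$.

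The hard part is the size bound $|A'|\ge(1-\eps)K^{-1/2}|A|$, which is precisely the improvement this paper is after. The naive estimate $|A'|\ge\delta(1-\xi)|A|\ge\alpha(1-\xi)T$ only reproduces the classical $\Omega(|A|/K)$, and one cannot afford the lossy constant $2^{10}$ from Lemma~\ref{l:ST}'s second branch if the clean factor $(1-\eps)$ is to survive. The point I would exploit is that the set $A'$ delivered by Lemma~\ref{lem:tv} lives inside a single fibre $x_\star+P'$ (indeed $A'-A'\subseteq P'-P'$) and is obtained from that fibre by discarding only a $\tfrac{\xi}{2}$-fraction of poorly connected points; so the real task is to arrange, via the popularity level that Lemma~\ref{l:ST} is engineered to find, that the fibre $A\cap(x_\star+P')$ already contains $\gtrsim K^{-1/2}|A|$ points of $A$. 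Matching the optimisation of $\alpha$ and $\xi\sim\eps$ against both the doubling constant and this fibre count — and checking that no loss worse than $(1-\eps)$ is incurred even in the extremal regimes (very few but very popular differences, versus many barely-popular ones) — is where I expect essentially all of the difficulty of the lemma to sit.
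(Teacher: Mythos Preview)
Your proposal has a genuine gap, and you have in fact identified it yourself: the size bound $|A'|\ge (1-\eps)K^{-1/2}|A|$ is unreachable by the route you sketch. Running Lemma~\ref{lem:tv} on the relation $R=\{(a,b)\colon a-b\in P'\}$ yields $|A'|\ge \delta(1-\xi)|A|$ with $\delta=\Sigma'/|A|$, and Lemma~\ref{l:ST} gives $\Sigma'\ge\alpha T$; but the hypothesis only guarantees $T\ge \eps|A|/(4K)$, so at best $\delta\gtrsim \eps/K$, which is the old $\Omega(|A|/K)$ bound. Your suggested rescue --- to arrange that the fibre $A\cap(x_\star+P')$ has size $\gtrsim K^{-1/2}|A|$ --- is not something Lemma~\ref{l:ST} can deliver: that lemma selects indices to balance $\sum x_i$ against $|I|$, but says nothing whatsoever about how a shifted copy of the selected difference set $P'$ intersects $A$. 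The last paragraph of your proposal is really an acknowledgement that the argument is incomplete at the decisive point.

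The paper's proof is both simpler and structurally different: it does \emph{not} use Lemma~\ref{l:ST} at all (that lemma is only needed for the $Q$-case, Lemma~\ref{l:Q}). The key observation you are missing is that for a \emph{single} $d\in P$ the set $A_d=A\cap(A+d)$ already satisfies $|A_d|=r_{A-A}(d)\ge K^{-1/2}|A|$ by the very definition of $P$. One then uses the hypothesis $\sum_{d\in P}|A_d|^2\ge \eps|A|^3/(4K)$ only to locate a particular $d(\star)\in P$ for which $A_{d(\star)}$ has few ``bad'' pairs $(x,y)$ with $r_{A-A}(x-y)$ small; pruning those out costs at most an $\eps$-fraction of $A_{d(\star)}$, giving $|A'|\ge(1-\eps)K^{-1/2}|A|$ for free. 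The doubling bound then follows from a two-step argument inside $A_{d(\star)}$ rather than the four-step path count of Lemma~\ref{lem:tv}. In short: you are importing the $Q$-case machinery into a situation where the threshold defining $P$ already hands you a large structured set, and that detour is precisely what loses the factor $K^{1/2}$ in the size.
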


\begin{proof}
	For every difference $d\in P$ we set $A_d=A\cap (A+d)$. Due to $|A_d|=r_{A-A}(d)$
	the hypothesis implies 
		\begin{equation}\label{e:2}
		\sum_{d\in P}|A_d|^2\ge \eps|A|^3/(4K)\,.
	\end{equation}
		For every pair $(x, y)\in A^2$ the set $L(x, y)=\{d\in P\colon x, y\in A_d\}$
	has at most the cardinality $|L(x, y)|\le r_{A-A}(x-y)$, because every 
	difference $d\in L(x, y)$ corresponds to its own representation $x-y=(x-d)-(y-d)$ of
	$x-y$ as a difference of two members of $A$. Applying this observation to all pairs 
	in	
		\[
		\Xi=\bigl\{(x, y)\in A^2\colon r_{A-A}(x-y)\le \eps^2|A|/(16K)\bigr\}
	\]
		we obtain 
		\[
		\sum_{d\in P}|A_d^2\cap \Xi|
		=
		\sum_{(x, y)\in \Xi}|L(x, y)|
		\le
		\sum_{(x, y)\in \Xi}r_{A-A}(x-y)
		\le
		\frac{\eps^2|A||\Xi|}{16K}
		\le
		\frac{\eps^2|A|^3}{16K}\,.
	\]
		Together with~\eqref{e:2} this yields  
		\[
		\sum_{d\in P}\bigl(\eps|A_d^2|-4|A_d^2\cap \Xi|\bigr)\ge 0
	\]
		and, consequently, for some element $d(\star)\in P$ the set
	$A_\star=A_{d(\star)}$ satisfies $|A_\star^2\cap \Xi|\le \eps|A_\star|^2/4$. 
	We contend that the set 
		\[
		A'=\bigl\{a\in A_\star\colon \text{ There are at most $|A_\star|/4$ pairs of 
				the form $(a, x)$ in $\Xi$}\bigr\}  
	\]
		has the required properties. 
			As in the proof of Lemma~\ref{lem:tv} we obtain 
		\[
		|A'|\ge (1-\eps)|A_\star| = (1-\eps)r_{A-A}(d(\star))\ge (1-\eps)K^{-1/2}|A|\,;
	\]
		so it remains to derive the required upper bound on $|A'-A'|$. 
	
	To this end we consider an arbitrary pair $(a, a')$ of elements of $A'$. Owing to 
	the definition of $A'$ there are at least $|A_\star|/2$ elements $x\in A_\star$ 
	such that $(a, x)\not\in\Xi$ and $(a', x)\not\in\Xi$. For each of them we have
	$a-a'=(a-x)-(a'-x)$, there are at least $\eps^2|A|/(16K)$ pairs $(a_1, a_2)\in A^2$ 
	solving the equation $a-x=a_1-a_2$ and at least the same number of pairs 
	$(a_3, a_4)\in A^2$ such that $a'-x=a_3-a_4$. Altogether there are at least 
		\[
		\eps^4|A|^2|A_\star|/(2^{9}K^2)
		\ge 
		2^{-9}\eps^4K^{-5/2}|A|^3
	\]
		possibilities of writing $a-a'=(a_1-a_2)-(a_3-a_4)$ and for this reason we have 
		\[
		|A'-A'|
		\le 
		\frac{|A|^4}{2^{-9}\eps^4K^{-5/2}|A|^3}
		=
		2^9\eps^{-4}K^{5/2}|A|
		\le 
		2^{10}\eps^{-4}K^3 |A'|\,. \qedhere
	\]
	\end{proof}

We conclude the proof of Theorem~\ref{s:1} by taking care of the right case in~\eqref{eq:1113}.  

\begin{lem}\label{l:Q}
	If $\sum_{d\in Q}r_{A-A}(d)^2\ge (1-\eps/4)|A|^3/K$, then there is a 
	set $A'\subseteq A$ of size $|A'|\ge (1-\eps)K^{-1/2}|A|$ such that 
	$|A'-A'|\le 2^{33}\eps^{-9}K^{4}|A'|$.
\end{lem}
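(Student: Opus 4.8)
The plan is to run the Balog--Szemer\'edi--Gowers machinery of Lemma~\ref{lem:tv} on a relation built from the popular differences inside~$Q$, with Lemma~\ref{l:ST} supplying exactly the right collection of differences. Put $n=|A|$ and, for $d\in Q$, set $x_d=r_{A-A}(d)/n\in[0,K^{-1/2})$; write $S=\sum_{d\in Q}x_d$ and $T=\sum_{d\in Q}x_d^2$. The hypothesis gives $T=\tfrac1{n^2}\sum_{d\in Q}r_{A-A}(d)^2\ge(1-\eps/4)n/K$, while the defining cap $x_d<K^{-1/2}$ yields $T\le K^{-1/2}S$ and hence the decisive bound $S\ge(1-\eps/4)K^{-1/2}n$; trivially $S\le n$. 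Here $S$ is exactly the density (times~$n$) of the relation I am about to build, so the lower bound on $S$ is what will eventually feed the factor $K^{-1/2}$ into the size of~$A'$.

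I would then apply Lemma~\ref{l:ST} to the sequence $(x_d)_{d\in Q}$ with a parameter $\alpha$ to be tuned (with $1-\alpha$ of order $\eps$) to extract a set $I\subseteq Q$ of popular differences whose weight $\Sigma=\sum_{d\in I}x_d$ is large while its cardinality obeys $|I|^4\le 2^{10}S^2\Sigma^6/\bigl((1-\alpha)^5T^4\bigr)$. Because $x_d=x_{-d}$ and $0\notin Q$, one may take $I$ symmetric, so that $R=\{(a,b)\in A^2\colon a-b\in I\}$ is a symmetric relation of density $\delta=|R|/n^2=\Sigma/n$. Feeding $R$ into Lemma~\ref{lem:tv} with a parameter $\xi$ of order $\eps$ produces $A'\subseteq A$ with $|A'|\ge(1-\xi)\Sigma$ enjoying the path property: every $(a_1,a_2)\in A'^2$ has at least $2^{-7}\delta^4\xi^4n^2|A'|$ triples $(x,b,y)$ with $(a_1,x),(b,x),(b,y),(a_2,y)\in R$.

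For the doubling bound I would read each path as an additive identity. By symmetry of~$I$,
\[
	a_1-a_2=(a_1-x)+(x-b)+(b-y)+(y-a_2)
\]
displays $a_1-a_2$ as an ordered sum of four elements of~$I$, and since the path $(x,b,y)$ is recovered from $(a_1,a_2)$ together with the four summands, distinct paths give distinct such sums. Thus every $v\in A'-A'$ has at least $2^{-7}\delta^4\xi^4n^2|A'|$ representations as a sum of four elements of~$I$, whence $|A'-A'|\le 2^7|I|^4/\bigl(\delta^4\xi^4n^2|A'|\bigr)$. Substituting $\delta=\Sigma/n$ and $|A'|\ge(1-\xi)\Sigma$, and then the cardinality bound from Lemma~\ref{l:ST}, telescopes this to
\[
	\frac{|A'-A'|}{|A'|}\le\frac{2^7|I|^4n^2}{(1-\xi)^2\xi^4\Sigma^6}\le\frac{2^{17}n^2S^2}{(1-\alpha)^5(1-\xi)^2\xi^4T^4}\le\frac{2^{17}K^4}{(1-\alpha)^5(1-\xi)^2\xi^4(1-\eps/4)^4}\,,
\]
using $S\le n$ and $T\ge(1-\eps/4)n/K$ at the last step. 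With $\alpha=1-\Theta(\eps)$ and $\xi=\Theta(\eps)$ this is $O_\eps(K^4)$, and it is precisely the exponents $|I|^4$, $T^4$, $S^2$ in Lemma~\ref{l:ST} that conspire with $S\le n$ and $T\gtrsim n/K$ to produce the power $K^4$ rather than something larger.

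The step I expect to be the real obstacle is the sharp size bound $|A'|\ge(1-\eps)K^{-1/2}n$. As $|A'|\ge(1-\xi)\Sigma$, this forces the differences harvested by Lemma~\ref{l:ST} to retain essentially all of the available mass, $\Sigma\ge(1-\eps')S$, since only then does $S\ge(1-\eps/4)K^{-1/2}n$ transfer to~$A'$; and this competes directly with the cardinality constraint $|I|^4\le 2^{10}S^2\Sigma^6/((1-\alpha)^5T^4)$ underpinning the doubling, because enlarging $I$ to capture more weight also enlarges~$|I|$. My expectation is that the two demands are reconciled by the very inequality defining~$Q$, namely $T\le K^{-1/2}S$: it forces $S^2/T\ge(1-\eps/4)n$, which bounds how fast $|I|$ grows as one harvests additional weight, so that for a suitable threshold one may take $I$ to consist of all differences of $Q$ above it, capture a $(1-\eps')$-fraction of~$S$, and still satisfy the Lemma~\ref{l:ST} bound. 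Turning this heuristic into the precise choice of threshold—and then balancing $\alpha=1-\Theta(\eps)$ and $\xi=\Theta(\eps)$ against $\eps$ to land on the constant $2^{33}\eps^{-9}$—is where the genuine work lies; the remainder follows the template already executed in Lemma~\ref{l:P}.
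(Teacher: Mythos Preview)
Your plan is exactly the paper's argument, but the ``real obstacle'' you flag at the end is self-inflicted by your normalisation, and your proposed workaround does not close it. With $x_d=r_{A-A}(d)/n\in[0,K^{-1/2})$ the first clause of Lemma~\ref{l:ST} returns only $\Sigma\ge\alpha T\approx n/K$, a factor $K^{1/2}$ short of the $\Sigma\gtrsim K^{-1/2}n$ you need for $|A'|\ge(1-\eps)K^{-1/2}n$. You then ask for $\Sigma\ge(1-\eps')S$, which the lemma never promises, and ``taking $I$ to consist of all differences of $Q$ above a threshold'' so as to capture a $(1-\eps')$-fraction of $S$ comes with no usable control on $|I|$ (in the extreme, $I=Q$ captures all of $S$ but $|Q|$ can be of order $K|A|$).

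The fix is a one-line rescaling that lets the $x_i$ actually fill $[0,1]$: set $x_i=K^{1/2}r_{A-A}(d_i)/|A|\in[0,1)$, so that now $T\ge(1-\eps/4)|A|$ and $S\le K^{1/2}|A|$. With $\alpha=1-\eps/4$ the first clause of Lemma~\ref{l:ST} already gives $\sum_{i\in I}x_i\ge(1-\eps/4)^2|A|\ge(1-\eps/2)|A|$, whence $\delta=(K^{1/2}|A|)^{-1}\sum_{i\in I}x_i\ge(1-\eps/2)K^{-1/2}$, and Lemma~\ref{lem:tv} with $\xi=\eps/2$ yields $|A'|\ge(1-\eps/2)\delta|A|\ge(1-\eps)K^{-1/2}|A|$ with no further work. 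The second clause, rearranged as $|I|^4\le 2^{21}\eps^{-5}K^{4}\delta^6|A|^4$, then feeds into your path-counting exactly as you described and produces $|A'-A'|\le 2^{33}\eps^{-9}K^4|A'|$. (The symmetrisation of $I$ is also unnecessary: write $a_1-a_2=(a_1-x)-(b-x)+(b-y)-(a_2-y)$ as $q_1-q_2+q_3-q_4$ with each $q_j\in Q'$ and bound by $|Q'|^4$.)
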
 

\begin{proof}
	Let $Q=\{d_1, \ldots, d_{|Q|}\}$ enumerate $Q$. By the definition 
	of $Q$ there are real numbers $x_1, \ldots, x_{|Q|}\in [0, 1]$ such that  
		\[
		r_{A-A}(d_i)
		=
		x_iK^{-1/2}|A|
		\quad \text{ holds for every } i\in [|Q|]\,.
	\]
		Owing to $\sum_{d\in A-A} r_{A-A}(d)=|A|^2$ and the hypothesis we have 		\[
		\sum_{i=1}^{|Q|}x_i \le K^{1/2} |A|
		\quad \text{ as well as } \quad
		\sum_{i=1}^{|Q|}x_i^2 \ge (1-\eps/4)|A|\,.
	\]
		By Lemma~\ref{l:ST} applied with $\alpha=1-\eps/4$ there exist an index 
	set $I\subseteq [|Q|]$ such that  
		\begin{equation}\label{eq:3254}
		\sum_{i\in I}x_i
		\ge
		\max\left\{(1-\eps/2)|A|, 
			\bigl(2^{-21}\eps^5K^{-1}|I|^4|A|^2\bigr)^{1/6}\right\}\,.
	\end{equation}
		Now we set $Q'=\{d_i\colon i\in I\}$, consider the relation 
		\[
		R=\{(a_1, a_2)\in A^2\colon a_1-a_2\in Q'\}
	\]
		and define $\delta\in (0, 1]$ by $|R|=\delta |A|^2$.
	Due to
		\[
		\delta
		=
		|A|^{-2}\sum_{i\in I}r_{A-A}(d_i)
		=
		\frac 1{K^{1/2}|A|}\sum_{i\in I} x_i
	\]
		the bounds in~\eqref{eq:3254} imply both
		\begin{equation}\label{eq:3858}
		\delta
		\ge
		(1-\eps/2)K^{-1/2}
		\quad \text{ and } \quad
		\frac{|I|^4}{\delta^6|A|^4}
		\le 
		2^{21}\eps^{-5}K^4\,.
	\end{equation}
		
	By Lemma~\ref{lem:tv} applied to $\xi=\eps/2$ and $R$ there exists a 
	set $A'\subseteq A$ of size 
		\[
		|A'|\ge (1-\eps/2)\delta|A|\ge (1-\eps)K^{-1/2}|A|
	\]
		such that for every pair $(a_1, a_2)\in A'^2$ there are at least 
	$2^{-11}\eps^4\delta^4|A|^2|A'|$ triples $(x, b, y)\in A^3$ with  
	$(a_1, x), (b, x), (b, y), (a_2, y)\in R$.
	Due to the equation 
		\[
		(a_1-a_2)=(a_1-x)-(b-x)+(b-y)-(a_2-y)
	\]
		this means that every difference $a_1-a_2\in A'-A'$ has at least 
	$2^{-11}\eps^4\delta^4|A|^2|A'|$ representations of the form 
	$q_1-q_2+q_3-q_4$ with $q_1, q_2, q_3, q_4\in Q'$, whence
		\[
		|A'-A'|
		\le 
		\frac{|Q'|^4}{2^{-11}\eps^4\delta^4|A|^2|A'|}
		\overset{\eqref{eq:3858}}{\le} 
		2^{32}\eps^{-9}K^4(\delta|A|/|A'|)^2|A'|\,.	
	\]
		Due to $|A'|\ge (1-\eps/2)\delta |A|\ge \delta|A|/\sqrt{2}$ the result follows.  
\end{proof}

\subsection*{Acknowledgement}
We would like to thank the referees for reading our article very carefully. 

\begin{bibdiv}
\begin{biblist}
\bib{B}{article}{
   author={Balog, Antal},
   title={Many additive quadruples},
   conference={
      title={Additive combinatorics},
   },
   book={
      series={CRM Proc. Lecture Notes},
      volume={43},
      publisher={Amer. Math. Soc., Providence, RI},
   },
   date={2007},
   pages={39--49},
   review={\MR{2359466}},
   doi={10.1090/crmp/043/03},
}

\bib{BS}{article}{
   author={Balog, Antal},
   author={Szemer\'{e}di, Endre},
   title={A statistical theorem of set addition},
   journal={Combinatorica},
   volume={14},
   date={1994},
   number={3},
   pages={263--268},
   issn={0209-9683},
   review={\MR{1305895}},
   doi={10.1007/BF01212974},
}

\bib{G}{article}{
   author={Gowers, W. T.},
   title={A new proof of Szemer\'{e}di's theorem for arithmetic progressions of
   length four},
   journal={Geom. Funct. Anal.},
   volume={8},
   date={1998},
   number={3},
   pages={529--551},
   issn={1016-443X},
   review={\MR{1631259}},
   doi={10.1007/s000390050065},
}
	
\bib{S}{article}{
   author={Schoen, Tomasz},
   title={New bounds in Balog-Szemer\'{e}di-Gowers theorem},
   journal={Combinatorica},
   volume={35},
   date={2015},
   number={6},
   pages={695--701},
   issn={0209-9683},
   review={\MR{3439793}},
   doi={10.1007/s00493-014-3077-4},
}

\bib{Shao}{article}{
   author={Shao, Xuancheng},
   title={Large values of the additive energy in $\Bbb{R}^d$ and
   $\Bbb{Z}^d$},
   journal={Math. Proc. Cambridge Philos. Soc.},
   volume={156},
   date={2014},
   number={2},
   pages={327--341},
   issn={0305-0041},
   review={\MR{3177873}},
   doi={10.1017/S0305004113000741},
}

\bib{TV}{book}{
   author={Tao, Terence},
   author={Vu, Van},
   title={Additive combinatorics},
   series={Cambridge Studies in Advanced Mathematics},
   volume={105},
   publisher={Cambridge University Press, Cambridge},
   date={2006},
   pages={xviii+512},
   isbn={978-0-521-85386-6},
   isbn={0-521-85386-9},
   review={\MR{2289012}},
   doi={10.1017/CBO9780511755149},
}
	
\end{biblist}
\end{bibdiv}
\end{document}